\newtheorem{theorem}{Theorem}
\newtheorem{lemma}[theorem]{Lemma}
\newcommand{\N}{\mathbb N}
\newcommand{\BB}{\mathcal{B}}
\DeclareMathOperator{\vol}{vol}
\DeclareMathOperator{\exc}{exc}
\begin{document}
\sloppypar

\title{A Lower Bound for the Discrepancy of a Random Point Set}
\author{Benjamin Doerr\\
Max Planck Institute for Computer Science,\\
Campus E1 4\\
66123 Saarbr\"ucken, Germany}

\newcommand\eps{\varepsilon}
\maketitle

\begin{abstract}
  We show that there are constants $k, K > 0$ such that for all $N, s \in \N$, $s \le N$, the point set consisting of $N$ points chosen uniformly at random in the $s$-dimensional unit cube $[0,1]^s$ with probability at least $1 - e^{-ks}$ admits an axis-parallel rectangle $[0,x] \subseteq [0,1]^s$ containing $K \sqrt{sN}$ points more than expected. Consequently, the expected star discrepancy of a random point set is of order $\sqrt{s/N}$. 
\end{abstract}

\section{Introduction}

Discrepancy theory~\cite{BeckS95} deals with different types of uniformity questions including balanced colorings of hypergraphs, rounding problems, or balancing games. A discrepancy topic of particular interest in numerical analysis is the study of uniformly distributed point sets and sequences~\cite{Niederreiter92,DrmotaT97,DickP10}. Such constructions are the basis of quasi-Monte Carlo integration, the degree of their uniformity can be used to derive upper bounds for the integration error. 

Among several types of uniformity notions, the one of the star discrepancy seems to be the most common one. Let $N, s \in \N$. Let $P \subseteq [0,1]^s$ with $|P| = N$. For $x \in [0,1]^s$, let us call the set $[0,x] := \prod_{i = 1}^s [0,x_i]$ a \emph{box} (other names in use are anchored boxes or corners), and denote by $\BB := \{[0,x] \mid x \in [0,1]^s\}$ the set of all these boxes. Denoting the Lebesgue measure of a measurable set $B$ simply by $\vol(B)$,  the \emph{star discrepancy} of $P$ now is defined by \[D^*(P) := \sup_{B \in \BB} \big|\tfrac 1N |P \cap B| - \vol(B)\big|.\] It is thus a measure of how well $P$ satisfies the aim of being uniformly distributed with respect to all boxes---in the sense that each box contains a fraction of points equal to its share of the volume of the whole unit cube. 

The connection to numerical integration is made, among others, by the Koksma-Hlawka inequality~\cite{Koksma42,Hlawka61}, which states that the integral $\int_{[0,1]^s} f(x) dx$ is well approximated by the average $\frac 1N \sum_{p \in P} f(p)$ when $P$ has low discrepancy:  the approximation error can be bounded by the product of the star discrepancy of $P$ and a certain variation measure of $f$. This and similar results explain the enormous attention low-discrepancy point sets and sequences attracted.

The classic view on low-discrepancy point sets is to regard the asymptotics in $N$, the number of points, assuming the dimension $s$ to be fixed. Interestingly, random point sets are far from having the optimal behavior---their expected discrepancy is easily seen to be at least of the order of $N^{-1/2}$. A number of deep results of the last good 70 years provide point sets having a discrepancy of order $(1/N) (\log N)^{s-1}$, see~\cite{Niederreiter92,DrmotaT97,DickP10,Matousek10}. 

More recently, it was noted that this discrepancy behavior, and in particular, taking the dimension $s$ as a constant, is not very useful in many practical applications. If $s$ equals 360, as in some applications in finance, the number $N$ of points must be prohibitively large to let the term $(1/N) (\log N)^{s-1}$ sink below the trivial bound of $1$. Consequently, Heinrich, Novak,  Wasilkowski, and Wo\'{z}niakowski~\cite{HeinrichNWW01} started the quest for bounds and construction that have a better, in particular polynomial, dependence on $s$. Among other results, they show that the minimal star discrepancy of an $N$-point set in the $s$-dimensional unit cube is $O(\sqrt{s/N})$. Surprisingly, this bound is obtained by a random point set already. More precisely, the proof of  Theorem~3 in~\cite{HeinrichNWW01} shows that a random point set with probability $1 - \exp(-\Theta(c))$ has a discrepancy of at most $c \sqrt{s/N}$. This immediately implies that the expected discrepancy of a random point set is $O(\sqrt{s/N})$. 
%See  Aistleitner~\cite{Aistleitner11} for an elementary proof of the minimum star discrepancy bound, which in addition  states a not too large value for the constant implicit in the  $O(\sqrt{s/N})$ bound. 
See~\cite{AistleitnerH} for an elementary proof of these results that also gives good values for the implicit constants. See~\cite{Gnewuch12} for a recent survey on discrepancy results with explicit dependence on the dimension $s$.

No matching lower bounds for the minimal star discrepancy are known, the best one is $\Omega(s/N)$ by Hinrichs~\cite{Hinrichs04} (of course assuming $s = O(N)$). Closing this gap is one of the big open problems in this field.

Surprisingly, not even a lower bound for the discrepancy of a random point set is known. This is the objective of this note, where we give a simple proof that the upper bounds given in~\cite{HeinrichNWW01,AistleitnerH} are asymptotically tight.

\begin{theorem}\label{thm}
  There is an absolute constant $K$ such that the following is true. Let $N, s \in \N$ such that $s \le N$. Let $P$ be a set of $N$ points chosen independently and uniformly at random from $[0,1]^s$. Then the expected star discrepancy satisfies $E[D^*(P)] \ge K \sqrt{s/N}$. The probability that  $D^*(P)$ is less than $K \sqrt{s/N}$, is at most $\exp(-\Theta(s))$.
\end{theorem}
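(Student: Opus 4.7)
The plan is to show that with probability at least $1-\exp(-ks)$, some anchored box $[0,x]\subseteq[0,1]^s$ contains $N\vol([0,x])+K\sqrt{sN}$ points of $P$. Since for any box of volume $V$ the count is $\mathrm{Bin}(N,V)$ with standard deviation at most $\sqrt{N}/2$, an excess of $K\sqrt{sN}$ is $\Theta(\sqrt{s})$ standard deviations above the mean, and hence has single-box probability $\exp(-\Theta(s))$ --- exactly the scale of the claimed failure probability. The proof must therefore exhibit a rich family of candidate boxes whose joint behavior forces such an excess with high probability.

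My approach is to combine an expectation lower bound with a concentration inequality. The class of anchored boxes has VC dimension $s$: the points $q_i=(1/2,\ldots,1/2,1,1/2,\ldots,1/2)$ with a $1$ in position $i$ are shattered, since the dyadic box $[0,x^{(I)}]$ with $x^{(I)}_j=1$ for $j\in I$ and $x^{(I)}_j=1/2$ for $j\notin I$ contains precisely $\{q_i:i\in I\}$. By symmetrization, $N\cdot E[D^*(P)]$ is, up to a constant, the Rademacher complexity of this class on $N$ iid uniform points, and the Koltchinskii--Pollard-type lower bound for VC classes of dimension $s$ yields $E\bigl[\sup_x\bigl||P\cap[0,x]|-N\vol([0,x])\bigr|\bigr]\ge c\sqrt{sN}$. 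For the tail, let $Z=\sup_x(|P\cap[0,x]|-N\vol([0,x]))$; since $Z$ is the supremum of a bounded centered empirical process with variance proxy $v\le N/4$, Talagrand's inequality in Bousquet's form gives $\Pr[Z\le E[Z]-t]\le\exp(-ct^2/N)$ for $t\le E[Z]$, and taking $t=E[Z]/2$ yields $\Pr[D^*(P)<c\sqrt{s/N}/2]\le\exp(-\Omega(s))$, proving the theorem.

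The main obstacle is the Rademacher-complexity lower bound $\Omega(\sqrt{sN})$, which must produce the full $\sqrt{s}$ factor rather than a mere $\sqrt{\log s}$. A naive argument that takes the maximum over the $s$ independent single-coordinate boxes $B_{\{i\}}=\{p:p_i\le 1/2\}$ yields only $\Theta(\sqrt{N\log s})$, since the maximum of $s$ sub-Gaussian variables of variance $N/4$ is $\Theta(\sqrt{N\log s})$. Extracting the additional $\sqrt{s/\log s}$ factor requires either invoking the general VC-class lower bound directly, or working with the full product family of $2^s$ dyadic boxes $B_I=\prod_{j\in I}[0,1/2]\times\prod_{j\notin I}[0,1]$: their centered counts admit a Walsh--Hadamard expansion in the signatures $\sigma(p)\in\{0,1\}^s$ (with $\sigma_j(p)=\mathbf{1}(p_j\le 1/2)$) as linear combinations of pairwise-uncorrelated Rademacher sums $S_J=\sum_p \prod_{j\in J}(2\sigma_j(p)-1)$ of variance $N$ each, and a Paley--Zygmund lower bound on the supremum over this family then produces the required $\sqrt{sN}$ excess.
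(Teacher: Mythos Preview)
Your proposal correctly identifies the target and the overall architecture (expectation lower bound plus concentration), and the Talagrand/Bousquet step would indeed convert an expectation bound into the $\exp(-\Theta(s))$ tail. But the heart of the proof---the bound $E\bigl[\sup_B \exc(B)\bigr]\ge c\sqrt{sN}$---is not established. You yourself flag this as ``the main obstacle,'' and neither of the two routes you sketch closes it.

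The first route, ``invoking the general VC-class lower bound directly,'' does not exist in the form you need. VC dimension $d$ yields \emph{upper} bounds on Rademacher complexity (via Sauer--Shelah and chaining), but there is no generic lower bound $E_{P,\sigma}\bigl[\sup_f \sum_i \sigma_i f(p_i)\bigr]\ge c\sqrt{dN}$ for i.i.d.\ samples from an arbitrary distribution. The $\Omega(d/N)$ minimax lower bounds in learning theory are worst-case over distributions, constructed by placing mass on a shattered set; your shattered set $\{q_1,\dots,q_s\}$ has Lebesgue measure zero and says nothing about uniform random samples. A trivial counterexample to a general lower bound: take any class of VC dimension $d$ whose shattered configuration lies in a region of measure $N^{-10}$.

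The second route, the Walsh--Hadamard expansion of the dyadic-box counts plus ``a Paley--Zygmund lower bound,'' is only a hint. The expansion is correct, but note that $\mathrm{Var}(Z_I)=N\cdot 2^{-|I|}(1-2^{-|I|})\le N/4$ for every $I$, so each individual $Z_I$ has standard deviation $O(\sqrt{N})$; extracting the extra $\sqrt{s}$ factor from the supremum over $2^s$ heavily correlated such variables is precisely the non-trivial step, and Paley--Zygmund by itself does not deliver it. You have reduced the problem to itself.

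The paper takes a completely different, elementary route that avoids all of this machinery. Starting from the full cube $B_0=[0,1]^s$, it visits the coordinates $i=1,\dots,s$ sequentially and shrinks $x_{i+1}$ from $1$ to $1-1/s$ if and only if the slice $C_{i+1}=\{p\in B_i:p_{i+1}>1-1/s\}$ contains at least $\tfrac12\sqrt{N_i/s}$ fewer points than its conditional expectation $N_i/s$. Conditionally on $B_i$ (which always satisfies $N_i\ge N/4$), the slice count is $\mathrm{Bin}(N_i,1/s)$, so a one-sided binomial anti-concentration lemma (Lemma~\ref{lprob}) makes each step succeed with probability at least $3/160$, independently of previous steps, and each success adds at least $\sqrt{N/(16s)}$ to the excess. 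After $s$ steps the number $k$ of successes has $E[k]\ge 3s/160$, giving $E[\exc(B_s)]\ge (3/2560)\sqrt{sN}$; a Chernoff bound on $k$ yields the $\exp(-\Theta(s))$ tail directly. The key feature is \emph{adaptivity}: because the slice at step $i+1$ is judged relative to the current box $B_i$, the $\Theta(\sqrt{N/s})$ gains accumulate additively over the $s$ coordinates, which is exactly the mechanism your non-adaptive families of boxes lack.
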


To keep this note simple, we did not try to find good explicit values for the constants. 

Calculating the precise value of $E[D^*(P)]$ for fixed $s$ and $N$ seems to be a very difficult problem. To illustrate this fact, we just note that by Donsker's theorem we have for any fixed $s$ that
\begin{equation} \label{browniansh}
E[D^*(P)] \sqrt{N} \to E \Big[ \sup_{(t_1, \dots, t_s) \in [0,1]^s} \left| B (t_1, \dots, t_s) \right| \Big]  \qquad \textrm{as $N \to \infty$},
\end{equation}
where $B$ denotes the $s$-dimensional (standard) pinned Brownian sheet, and the precise value of the expression on the right-hand side of \eqref{browniansh} is unknown and considered to be a difficult problem (for $s>1$). See~\cite{vdVaartW96} for more details.

Quantities similar to $E[D^*(P)]$, in the more general form $E [ \sup_{f \in \mathcal{F}} | \sum_{p \in P} f(p) | ]$ for a class of functions $\mathcal{F}$, play an important role in nonparametric statistics, where they appear in large deviations inequalities for empirical processes (see for example \cite{tala,mass}). Consequently for applications it is desirable to find good estimates for such quantities. 

Finally, let us note that there are results for the limit behavior (for $N$ tending to infinity) of the expected $L_p$-star discrepancy of random point sets, see~\cite{HinrichsW12} and the references therein. We are not aware, though, of any results in these areas which would imply our discrepancy result.

\section{Proof}

The proof of Theorem~\ref{thm} is elementary and only relies on a well-known fact, namely that binomially distributed random variables with constant probability deviate from their expectation (in both directions) by an amount of order the square root of the expectation. 

We use this fact as follows. Starting with the box $B = [0,x]^s$ being the full box, that is, $x = (1, \ldots, 1)$, for $i$ from $1$ to $s$ sequentially we reduce $x_i$ from $1$ to $1 - 1/s$ if this increases the excess of points in $B$. By the above fact, in each such iteration with positive probability we gain an excess of $\Theta(\sqrt{N/s})$ points in $B$, finally leading to a box having $\Theta(\sqrt{sN})$ points more than it should.

More precisely, we use the following version of the above-mentioned probabilistic fact. To keep this note self-contained, we give an elementary proof of it in the appendix. Again, none of the constants has been optimized. 

\begin{lemma}\label{lprob}
  Let $X$ be a random variable distributed according to a binomial distribution with parameters $n \ge 16$ and $1/n \le p \le 1/4$. Then  $\Pr[X \le pn - \sqrt{pn}/2] \ge 3/160$.
\end{lemma}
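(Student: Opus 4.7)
The plan is to combine two elementary facts about $X \sim \mathrm{Bin}(n,p)$. First, the classical fact that the median of $X$ lies in $\{\lfloor np \rfloor, \lceil np \rceil\}$, so in particular $\Pr[X \le \lceil np \rceil] \ge 1/2$. Second, a uniform upper bound on the point probabilities, $\max_k \Pr[X = k] \le C/\sqrt{np(1-p)}$ for an absolute constant $C$ not much larger than $1/\sqrt{2\pi}$, obtained by applying Stirling's formula to $\binom{n}{k}$ at the mode. Both facts admit short self-contained proofs, as demanded by the paper.

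Given these two ingredients, write
\[
\Pr[X \le np - \sqrt{np}/2] \;=\; \Pr[X \le \lceil np \rceil] \;-\; \Pr\bigl[np - \sqrt{np}/2 < X \le \lceil np \rceil\bigr].
\]
The first term is at least $1/2$ by the median fact. The second term is bounded by the number of integers in the window (at most $\sqrt{np}/2 + 2$) times the peak probability $C/\sqrt{np(1-p)}$. Using $p \le 1/4$ and hence $1-p \ge 3/4$, this becomes $(\sqrt{np}/2 + 2) \cdot 2C/\sqrt{3np}$, which for $np$ above some modest absolute threshold is strictly less than $1/2$. The difference is then a positive constant far larger than $3/160$.

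The remaining regime, in which $np$ is small (say $1 \le np \le C_0$ for some absolute $C_0$), is handled directly by the Poisson-type estimate $\Pr[X = 0] = (1-p)^n \ge \exp(-np/(1-p))$, which for $p \le 1/4$ and $np$ bounded is an absolute constant. Since $np \ge 1$ forces $np - \sqrt{np}/2 \ge 1/2 > 0$, the event $\{X = 0\}$ lies inside the desired event; if necessary, the terms $\Pr[X=1], \Pr[X=2], \ldots$ can be added to push the total safely past $3/160$ at the boundary of the regime.

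The main technical obstacle is verifying that the explicit constant $3/160$ survives uniformly across all admissible $(n,p)$, especially in the intermediate range of $np$ where neither the Gaussian-type nor the Poisson-type argument is tight. The hypotheses $n \ge 16$, $p \ge 1/n$, and $p \le 1/4$ appear calibrated precisely so that the peak-probability bound holds with near-optimal constant and the two regimes overlap with enough slack to connect cleanly.
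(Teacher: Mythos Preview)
Your strategy is sound and would work, but it differs substantially from the paper's. The paper does not bound point masses of $\mathrm{Bin}(n,p)$ directly via Stirling. Instead it takes a detour through $p=1/2$: first it proves (Lemma~\ref{lcoin}) that for $Y'\sim\mathrm{Bin}(m,1/2)$ one has $\Pr[Y'\le m/2-\tfrac12\sqrt{m/2}]\ge 1/8$, using only the clean bound $\binom{m}{\lfloor m/2\rfloor}\le 2^m/\sqrt{\pi m/2}$ on the central binomial coefficient. It then realises $X\sim\mathrm{Bin}(n,p)$ as a two-stage experiment: pick $Y\subseteq[n]$ with each element included independently with probability $2p$, then keep each element of $Y$ independently with probability $1/2$; the surviving set has size distributed as $X$. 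One shows $\Pr[|Y|\le 2pn]\ge 0.15$ via a median argument plus a single point-mass bound at $\lceil 2pn\rceil$, and then, conditional on $|Y|=y\le 2pn$, applies the $p=1/2$ lemma to the halving step. The product $0.15\cdot 1/8=3/160$ falls out with no case analysis in $np$.

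The trade-off is clear. Your direct median-plus-peak argument is the textbook route and morally simpler, but---as you yourself flag---it forces you to track an explicit Stirling constant $C$ for $\max_k\Pr[X=k]$ uniformly over all admissible $(n,p)$, and to stitch together a Gaussian regime (large $np$) with a Poisson regime (small $np$), checking numerically that the seam holds. The paper's coupling buys freedom from that bookkeeping: all the Stirling work happens once at $p=1/2$, where the constants are cleanest, and the reduction to that case costs only the crude estimate $\Pr[|Y|\le 2pn]\ge 0.15$. Your approach would likely yield a better final constant if pushed through; the paper's is shorter to make fully rigorous.
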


To prove the main result, let us for a given $N$-point set $P$ denote by \[\exc(B) := |P \cap B| - N \vol(B)\] the excess of points in a measurable set $B$. Hence this is a signed discrepancy notion without normalization by $N$. 

Let $P$ be a set of $N$ random points chosen independently and uniformly in $[0,1]^s$. Since we do not care about the constant in our main result, we may assume that $N \ge 64$. For the same reason, for $s < 4$ we may simple regard any box $B$ of volume $3/4$ and invoke Lemma~\ref{lprob} to see that with probability at least $3/160$, its complement contains $\sqrt{N/4}/2$ points less than expected, implying $\exc(B) \ge \sqrt{N/4}/2$. Finally, we may assume that $s \le N/4$. If $s$ is larger (but still at most $N$ as assumed), we may project $P$ onto its first $s' = \lfloor N/4 \rfloor$ coordinates, apply the result to find a box $B' \subseteq [0,1]^{s'}$ with large excess, and note that $B := B' \times [0,1]^{s - s'}$ is a box in $[0,1]^s$ having the same excess. Hence we may assume in the following that $N \ge 64$ and $4 \le s \le N/4$.

We will now inductively define numbers $x_1, \ldots, x_s \in \{1 - 1/s, 1\}$ such that for $i = 0, \ldots, s$, the box $B_i := \prod_{j = 1}^i [0,x_j] \times [0,1]^{s-i}$ has an expected excess of order $i \sqrt{N/s}$ and surely has a nonnegative excess. We observe that any choice of  $x_1, \ldots, x_s$ gives $\vol(B_i) \ge (1  - 1/s)^s \ge 1/4$.

Note that $B_0 = [0,1]^s$ is already defined and has an excess of zero. Assume that for some $0 \le i < s$ we have fixed $x_1, \ldots, x_i$, and consequently, $B_0, \ldots, B_i$, and that $B_i$ has a nonnegative excess. Given all this, $B_i$ contains $N_i := |B_i \cap P| \ge N \vol(B_i) \ge N/4$ points, all whose $(i+1)$-st to $s$-th coordinate are independently and uniformly distributed in $[0,1]$. 

Consequently, the rectangle $C_{i+1} := \prod_{j = 1}^i [0,x_j] \times (1-1/s,1] \times [0,1]^{s-i-1} \subseteq B_i$ contains each of these $N_i$ points with probability $1/s$. By Lemma~\ref{lprob}, with probability at least $3/160$, the rectangle $C_{i+1}$ contains at least $\sqrt{N_i/s}/2$ points less than the expected value $N_i/s$. In this case, put $x_{i+1} := 1 - 1/s$, implying $B_{i+1} = B_i \setminus C_{i+1}$, otherwise put $x_{i+1} = 1$, implying $B_{i+1} = B_i$. In the first case, the excess of $B_{i+1}$ satisfies
\begin{align}
	\exc(B_{i+1}) %&= |P \cap B_{i+1}| - N \vol(B_{i+1}) \nonumber\\
	&= |P \cap B_i| - |P \cap C_{i+1}| -  N (1 - 1/s)\vol(B_i) \nonumber\\
	&\ge N_i - (N_i/s - \sqrt{N_i/s}/2) - N (1 - 1/s)\vol(B_i) \nonumber\\
	&=  (1-1/s) \exc(B_i) + \sqrt{N_i/s}/2 \nonumber\\
	&\ge (1-1/s) \exc(B_i) + \sqrt{N/(16s)}.\label{eq1}
\end{align}
In the second case, $\exc(B_i)$ and $\exc(B_{i+1})$ are trivially equal. In both cases, $B_{i+1}$ has a nonnegative excess.

Assume that all $x_i$ and $B_i$ were constructed in this fashion. Let $k$ be the number of $x_i$ which are equal to $1-1/s$. Then by repeated application of (\ref{eq1}), the excess of $B_s$ is at least $k (1 - 1/s)^k \sqrt{N/(16s)}  \ge k \sqrt{N/s}/16$. Since each $x_i$ independently with probability at least $3/160$ is $1 - 1/s$, we have $E(k) \ge 3s/160$. Consequently, the expected excess of $B_s$ is at least $(3/2560) \sqrt{sN}$. Also, by a simple Chernoff bound argument (e.g.,~\cite[Corollary~A.1.14]{AlonS08}), the probability that $k$ is less than, say, $\frac 12 3s / 160$, a necessary condition for the excess being less than $\frac 12 (3/2560) \sqrt{sN}$, is at most $\exp(-\Theta(s))$. 

This concludes the proof.

\subsection*{Acknowledgments}

The author is thankful to Christoph Aistleitner, Michael Gnewuch, Aicke Hinrichs, and Erich Novak for several useful comments. Special thanks to Christoph Aistleitner for contributing the penultimate and antepenultimate paragraphs of the introduction.

\bibliographystyle{alpha}%alpha,abbrv
%\bibliography{references}

\appendix

\section{Proof of the Lemma~\ref{lprob}}

To keep this note self-contained, we quickly prove the probabilistic estimate of Lemma~\ref{lprob}. We start with a slightly stronger statement for the case of fair coin flips.

\begin{lemma}\label{lcoin}
  Let $X$ be a binomial random variable with parameters $n \ge 0$ and $p = 1/2$. Then $\Pr[X \le n/2 - (1/2)\sqrt{n/2}] \ge 1/8$.
\end{lemma}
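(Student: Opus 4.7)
The plan is to exploit the symmetry of the binomial distribution with parameter $p=1/2$. Set $c := (1/2)\sqrt{n/2}$. Since $X$ and $n-X$ are identically distributed, $\Pr[X \le n/2 - c] = \Pr[X \ge n/2 + c]$, and since the three events $\{X \le n/2 - c\}$, $\{|X - n/2| < c\}$, $\{X \ge n/2 + c\}$ partition the sample space,
\[
\Pr[X \le n/2 - c] \;=\; \tfrac{1}{2}\bigl(1 - \Pr[|X - n/2| < c]\bigr).
\]
Thus the claim reduces to the two-sided anti-concentration bound $\Pr[|X - n/2| < c] \le 3/4$.

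To prove this, I would bound the probability by a lattice-point count. The open interval $(n/2 - c, n/2 + c)$ has length $2c = \sqrt{n/2}$, so it contains at most $\sqrt{n/2} + 1$ integers. Since $k \mapsto \binom{n}{k}$ is unimodal with maximum at $k = \lfloor n/2 \rfloor$, every $\Pr[X = k] = \binom{n}{k}/2^n$ with $k$ in the range is at most $\binom{n}{\lfloor n/2 \rfloor}/2^n$. An elementary Stirling-type estimate (which can be obtained from the bound $m! \ge \sqrt{2\pi m}(m/e)^m$) yields $\binom{n}{\lfloor n/2 \rfloor}/2^n \le \sqrt{2/(\pi n)}$ for every $n \ge 2$. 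Multiplying,
\[
\Pr[|X - n/2| < c] \;\le\; (\sqrt{n/2} + 1)\sqrt{\tfrac{2}{\pi n}} \;=\; \tfrac{1}{\sqrt{\pi}} + \sqrt{\tfrac{2}{\pi n}},
\]
which drops below $3/4$ once $n$ passes a modest explicit threshold (roughly $n \ge 20$).

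The remaining obstacle is handling the finitely many small values of $n$ for which the previous display does not yet yield $3/4$, together with the parity bookkeeping needed when transferring the Stirling bound from even to odd $n$. For each such $n$ I would simply enumerate, computing $\Pr[X \le \lfloor n/2 - c \rfloor]$ directly from the binomial mass function and checking it is at least $1/8$. The only tight case is $n = 3$, where $\lfloor n/2 - c \rfloor = 0$ and $\Pr[X = 0] = 1/8$ meets the bound with equality; every other small $n$ clears $1/8$ with room to spare. Conceptually the argument is simply the familiar anti-concentration principle: a distribution of standard deviation $\Theta(\sqrt{n})$ and modal probability $\Theta(1/\sqrt{n})$ cannot place too much mass in an interval of length $\Theta(\sqrt{n})$, and the Stirling estimate supplies the needed bound on the mode.
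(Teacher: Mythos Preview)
Your proposal is correct and follows essentially the same route as the paper: reduce by symmetry to the anti-concentration bound $\Pr[|X-n/2|<c]\le 3/4$, then multiply the lattice-point count $\sqrt{n/2}+1$ by the Stirling bound $\binom{n}{\lfloor n/2\rfloor}2^{-n}\le 1/\sqrt{\pi n/2}$ to handle large $n$, and verify small $n$ directly. The only cosmetic difference is that the paper, instead of enumerating all small $n$ one by one, shortens the checks for $5\le n\le 18$ by noting that there the interval contains at most $2$ or $3$ integers, so the same product bound already suffices.
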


\begin{proof}
  For $n \le 3$, we have $\Pr[X \le n/2 - (1/2)\sqrt{n/2}] \ge \Pr[X = 0] = 2^{-n} \ge 1/8$, for $n = 4$, we estimate $\Pr[X \le n/2 - (1/2)\sqrt{n/2}] = \Pr[X \in \{0,1\}] = 5/16 \ge 1/8$.

By the fact that $\Pr[X = i] = \Pr[X = n-i]$ is symmetric, we have $\Pr[X \le  n/2 - (1/2) \sqrt{n/2}] = (1/2) (1 - \Pr[|X - n/2| < (1/2)\sqrt{n/2}])$. Consequently, it suffices to show that $P_n := \Pr[|X - n/2| < (1/2)\sqrt{n/2}] \le 3/4$. Let $A_n := |\{i \in \N \mid |i - n/2| < (1/2)\sqrt{n/2}\}|$. Since the central binomial coefficient is the largest, we have $P_n \le A_n 2^{-n} \binom{n}{\lfloor n/2 \rfloor}$. We use the well-known estimate $\binom{n}{\lfloor n/2 \rfloor} \le 2^n / \sqrt{\pi n / 2}$ valid for all $n \ge 1$. 

For $5 \le n \le 9$, we have $A_n \le 2$ and thus $P_n \le 2 / \sqrt{\pi n / 2} \le 3/4$. For $10 \le n \le 18$, we have $A_n \le 3$ and thus $P_n \le 3 \cdot 2^{-n} \binom{n}{\lfloor n/2 \rfloor}$. This is easily seen to be less than $3/4$ by taking the precise value $\binom{n}{\lfloor n/2 \rfloor} = 252$ for $n = 10$ and the estimate $\binom{n}{\lfloor n/2 \rfloor} \le 2^n / \sqrt{\pi n / 2}$ for $11 \le n \le 18$. For $n \ge 19$, we have $A_n \le \sqrt{n/2} + 1$ and consequently $P_n \le (\sqrt{n/2}+1)/\sqrt{\pi n /2} \le (1 + 1/\sqrt{n/2}) / \sqrt{\pi} \le 3/4$.
\end{proof}

We use the result above to give an elementary proof of Lemma~\ref{lprob} via a detour through the binomial distribution with parameters $n$ and $2p$.

\begin{proof}
Let $Y$ be a random subset of $[n] := \{1, \ldots, n\}$ such that for all $i \in [n]$ independently, we have $\Pr[i \in Y] = 2p$. Let $Z$ be a random subset of $Y$ such that for all $i \in Y$ independently, we have $\Pr[i \in Z] = \frac 12$. Clearly, $z := |Z|$ follows a binomial distribution with parameters $n$ and $p$. 
  
  Since $y := |Y|$ is binomially distributed with parameters $n$ and $2p$,  its median is at most $\lceil 2pn \rceil$. Consequently, the probability that $y$ is at most $2pn$, is at least $1/2 - \Pr[y = \lceil 2pn \rceil]$. The binomial distribution has the property that $B_{n,p}(k)$ is maximal for $p = k/n$, and that $B_{n,k/n}(k)$ is decreasing for $k = 0, \ldots, \lfloor n/2\rfloor$ and increasing for $k = \lceil n/2 \rceil, \ldots, n$. Consequently, since $1/n \le p \le 1/4$ and $n \ge 16$, we have $\Pr[y = \lceil 2pn \rceil] \le B_{n,\lceil 2pn \rceil/n}(\lceil 2pn \rceil) \le B_{n,2/n}(2) \le 2 (1 - 2/n)^{n-2} \le 2 \exp(-2 (n-2)/n) \le 0.35$. Hence $\Pr(y \le 2pn) \ge 0.15$.
  
Conditional on the outcome of $y$, and assuming $1 \le y \le 2pn$, by Lemma~\ref{lcoin} we have 
\begin{align*}
	\Pr[z \le pn - (1/2) \sqrt{pn}] \ge \Pr[z \le y/2 - (1/2) \sqrt{y/2}] \ge 1/8.
\end{align*}
Here we used that $x \mapsto x/2 - (1/2)\sqrt{x/2}$ is increasing for $x \ge 1/8$. Note that if $y = 0$, we trivially have $\Pr[z \le pn - \sqrt{pn}] = 1 \ge 1/8$. Consequently,
\begin{align*}
	\Pr[z \le pn - (1/2) \sqrt{pn}] &\ge \sum_{i = 0}^{2pn} \Pr[y = i] \Pr[z \le pn - (1/2)\sqrt{pn} \mid y = i] \\
	& \ge (1/8) \Pr[y \le 2pn] \ge (1/8) \cdot 0.15 =  3/160.
\end{align*}
\end{proof}

\end{document}